\newtheorem{theorem}{Theorem}[section]
\newtheorem{proposition}[theorem]{Proposition}
\newtheorem{corollary}[theorem]{Corollary}
\theoremstyle{definition}
\newtheorem{definition}[theorem]{Definition}
\newtheorem{example}[theorem]{Example}
\theoremstyle{remark}
\numberwithin{equation}{section}
\def\id{{\rm id}}
\begin{document}

\title{Gaussian and Pr\"ufer conditions in bi-amalgamated algebras}

\author[Najib Mahdou]{Najib Mahdou}
\address{Department of Mathematics, Faculty of Science and Technology of Fez, Box 2202, University S. M.
Ben Abdellah Fez, Morocco}
 \email{mahdou@hotmail.com}

\author[Moutu Abdou Salam Moutui]{Moutu Abdou Salam Moutui}
\address{Department of Mathematics, College of Science,
King Faisal University, Saudi Arabia} \email{mmoutui@kfu.edu.sa}




\subjclass[2000]{16E05, 16E10, 16E30, 16E65}

\keywords{Bi-amalgamation, amalgamated algebra, Gaussian ring,
Pr\"ufer ring.}

\begin{abstract} Let $f: A\rightarrow B$ and $g: A\rightarrow C$ be two ring homomorphisms and let $J$ (resp., $J'$) be an ideal of $B$ (resp., $C$) such that $f^{-1}(J)=g^{-1}(J')$.
In this paper, we investigate the transfer of the notions of
Gaussian and Pr\"ufer properties to the bi-amalgamation of $A$ with
$(B,C)$ along $(J,J')$ with respect to $(f,g)$ (denoted by
$A\bowtie^{f,g}(J,J')),$ introduced and studied by Kabbaj, Louartiti
and Tamekkante in 2013. Our results recover well known results on
amalgamations in \cite{Finno} and generate new original examples of
rings satisfying these properties.
\smallskip
\end{abstract}

\maketitle

\section{Introduction} All rings considered in this paper are assumed to be commutative,
 and have identity element and all modules are unitary.\\

In 1932, Pr\"ufer
introduced and studied in \cite{P} integral domains in which every finitely generated ideal is invertible. In 1936, Krull \cite{Kru} named these
rings after H. Pr\"ufer and stated equivalent conditions that make a domain Pr\"ufer. Through the years, Pr\"ufer  domains acquired a great many
equivalent characterizations, each of which was extended to rings with zero-divisors in different ways. In their recent paper devoted to Gaussian
properties, Bazzoni and Glaz have proved that a Pr\"ufer ring satisfies any of the other four Pr\"ufer conditions if and only if its total ring of
quotients satisfies that same condition \cite[Theorems 3.3 \& 3.6 \& 3.7 \& 3.12]{BG2}.
In 1970, Koehler \cite{ko} studied associative rings for which every cyclic module is quasiprojective.
She noticed that any commutative ring satisfies this property.
 In \cite{bkm}, the authors examined the
transfer of the Pr\"ufer conditions and obtained further evidence
for the validity of Bazzoni-Glaz conjecture sustaining that "the
weak global dimension of a Gaussian ring is 0, 1, or $\infty$"
\cite{BG2}. Notice that both conjectures share the common context of
rings. Abuihlail, Jarrar and Kabbaj studied in \cite{abjk} the
multiplicative ideal structure of commutative rings in which every
finitely generated ideal is quasi-projective. They provide some
preliminaries quasi-projective modules over commutative rings and
they investigate the correlation with well-known Pr\"ufer
conditions; namely, they proved that this class of rings stands
strictly between the two classes of arithmetical rings and Gaussian
rings. Thereby, they generalized Osofsky's theorem on the weak
global dimension of arithmetical rings and partially resolve
Bazzoni-Glaz's related conjecture on Gaussian rings. They also
established an analogue of Bazzoni-Glaz results on the transfer of
Pr\"ufer conditions between a ring and its total ring of quotients.
In \cite{CJKM}, the authors studied the transfer of the notions of
local Pr\"ufer ring and total ring of quotients. They examined the
arithmetical, Gaussian, fqp conditions to amalgamated duplication
along an ideal. At this point, we make the following definition:
\begin{definition}  Let $R$ be a commutative ring.\\
\begin{enumerate}

\item $R$ is called an \emph{arithmetical ring} if the lattice formed by its ideals is distributive (see \cite{Fu}). \\
\item $R$ is called a \emph{Gaussian ring} if for every $f, g \in R[X]$, one has the content ideal equation $c(fg) = c(f)c(g)$ (see \cite{T}).
\item $R$ is called a \emph{Pr\"ufer ring} if every finitely generated regular ideal of $R$ is invertible (equivalently, every two-generated
    regular ideal is invertible), (See \cite{BS,Gr}).
\end{enumerate}
\end{definition}
In the domain context, all these forms coincide with the definition of a $Pr\ddot{u}fer$ domain. Glaz \cite{G3} provides
examples which show that all these notions are distinct in the context of arbitrary rings. The following diagram of implications
 summarizes the relations between them \cite{BG,BG2,G2,G3,LR,Lu,T}:
 \begin{center}
 Arithmetical $\Rightarrow$ Gaussian $\Rightarrow$ $Pr\ddot{u}fer$
\end{center}
and examples are given in \cite{G3} to show that, in general, the implications cannot be reversed.\\
In this paper, we investigate the transfer of Gaussian and Pr\"ufer
properties in bi-amalgamation of rings, introduced and studied by
Kabbaj, Louartiti and Tamekkante in \cite{KLT} and defined as follow
:
 Let $f: A\rightarrow B$ and $g: A\rightarrow C$ be two ring homomorphisms and let $J$ and $J'$ be two ideals
    of $B$ and $C$, respectively, such that $I_{o}:=f^{-1}(J)=g^{-1}(J')$. The \emph{bi-amalgamation}
    (or \emph{bi-amalgamated algebra}) of $A$ with $(B, C)$ along $(J, J')$ with respect to $(f,g)$ is the subring
    of $B\times C$ given by $$A\bowtie^{f,g}(J,J'):=\big\{(f(a)+j,g(a)+j') \mid a\in A, (j,j')\in J\times J'\big\}.$$
This construction was introduced in \cite{KLT} as a natural
generalization of duplications \cite{D,DF1} and amalgamations
\cite{DFF1,DFF2}. In \cite{KLT}, the authors provide original
examples of bi-amalgamations and, in particular, show that
Boisen-Sheldon's CPI-extensions \cite{BSh2}  can be viewed as
bi-amalgamations (notice that \cite[Example 2.7]{DFF1} shows that
CPI-extensions can be viewed as quotient rings of amalgamated
algebras). They also show how every bi-amalgamation can arise as a
natural pullback (or even as a conductor square) and then
characterize pullbacks that can arise as bi-amalgamations. Then, the
last two sections of \cite{KLT}  deal, respectively, with the
transfer of some basic ring theoretic properties to bi-amalgamations
and the study of their prime ideal structures.  All their results
recover known results on duplications and amalgamations. Recently in
\cite{KMM}, the authors established necessary and sufficient
conditions for a bi-amalgamation to inherit the arithmetical
property, with applications on the weak global dimension and
transfer of the semihereditary property. We will adopt the following notations:\\
For any $p\in Spec(A,I_{o})$ (resp., $\in Max(A,I_{o})$), consider
the multiplicative subsets
$$S_{p}:=f(A-p)+J\ \text{ and }\ S'_{p}:=g(A-p)+J'$$
of $B$ and $C$, respectively, and let
$$f_p: A_p\rightarrow B_{S_{p}}\ \text{ and }\ g_p: A_p\rightarrow C_{S'_{p}}$$
be the canonical ring homomorphisms induced by $f$ and $g$. One can
easily check that
$$f_p^{-1}(J_{S_{p}})=g_p^{-1}(J'_{S'_{p}})=(I_{o})_p.$$ Moreover, by \cite[Lemma 5.1]{KLT}, $P:=p\bowtie^{f,g}(J,J')$ is a prime (resp., maximal) ideal of $A\bowtie^{f,g}(J,J')$ and, by \cite[Proposition 5.7]{KLT}, we have

$$\big(A\bowtie^{f,g}(J,J')\big)_{P}\cong A_p\bowtie^{f_p,g_p}(J_{S_{p}},J'_{S'_{p}}).$$
For a ring $R$, we denote by $Jac(R)$, the Jacbson radical of $R$.
\section{Results}\label{sec:2}

\noindent Let $f: A\rightarrow B$ and $g: A\rightarrow C$ be two
ring homomorphisms and let $J$ and $J'$ be two ideals of $B$ and
$C$, respectively, such that $I_{o}:=f^{-1}(J)=g^{-1}(J')$. All
along this section, $A\bowtie^{f,g}(J,J')$ will denote the
bi-amalgamation of $A$ with $(B, C)$ along $(J, J')$ with respect to
$(f,g)$.\\
Our first result investigates the transfer of Gaussian and Pr\"ufer
properties in bi-amalgamated algebras in case $J\times J'$ contains
a regular element.

\begin{theorem}\label{thm1}
Assume $J\times J'$ is a regular ideal of
 $(f(A)+J)~\times (g(A)+~J')$. Then $A\bowtie^{f,g}(J,J')$ is Gaussian (resp.,
Pr\"ufer) if and only if $J=B$, $J'=C$ and $B$ and $C$ are Gaussian
(resp., Pr\"ufer).
\end{theorem}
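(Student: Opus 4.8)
The plan is to treat the two implications separately, using as a common engine the classical fact that a finite direct product of rings is Gaussian (resp. Pr\"ufer) if and only if each of its factors is Gaussian (resp. Pr\"ufer).

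For the ``if'' direction, suppose $J=B$ and $J'=C$. Then every $(b,c)\in B\times C$ can be written as $(f(0)+b,\,g(0)+c)$ with $b\in J=B$ and $c\in J'=C$, so $A\bowtie^{f,g}(J,J')=B\times C$; the product fact above returns the conclusion at once. For the ``only if'' direction, assume first that $R:=A\bowtie^{f,g}(J,J')$ is Pr\"ufer. The Gaussian case will reduce to this, since Gaussian $\Rightarrow$ Pr\"ufer: once $J=B$ and $J'=C$ are established one has $R=B\times C$, and the product fact returns the Gaussianity of $B$ and $C$. The regularity hypothesis supplies a non-zero-divisor $(u,u')$ of $(f(A)+J)\times(g(A)+J')$ lying in $J\times J'$; equivalently $u\in J$ is regular in $f(A)+J$ and $u'\in J'$ is regular in $g(A)+J'$. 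Since $J\times J'$ is an ideal of $R$, the elements $(u,u')$, $(u,0)$, $(0,u')$ all lie in $R$, with $(u,u')$ regular in $R$ and $(u,0)$ a zero-divisor because $(u,0)(0,u')=0$.

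The crux is to force $J=B$ (the equality $J'=C$ being symmetric). We may assume $I_{o}\neq A$, for if $I_{o}=A$ then $1_{B}=f(1)\in f(A)\subseteq J$, giving $J=B$ directly. Consider the two-generated ideal $\mathfrak{a}:=\big((u,u'),(u,0)\big)$ of $R$; it is regular, hence invertible since $R$ is Pr\"ufer. Fix $p\in\mathrm{Max}(A,I_{o})$ and set $P:=p\bowtie^{f,g}(J,J')$. As $J\times J'\subseteq P$ we have $\mathfrak{a}\subseteq P$, and by the isomorphism recalled in the introduction $R_{P}\cong A_{p}\bowtie^{f_{p},g_{p}}(J_{S_{p}},J'_{S'_{p}})$ is a local Pr\"ufer ring in which $\mathfrak{a}_{P}$ is invertible, hence principal. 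A regular principal ideal cannot be generated by the zero-divisor $(u,0)$, so $\mathfrak{a}_{P}=(u,u')R_{P}$ and $(u,0)\in(u,u')R_{P}$. Writing $(u,0)=(u,u')(s,s')$ and using that $u$ and $u'$ are regular forces $(s,s')=(1,0)$, so $(1,0)\in R_{P}$; unwinding the definition of the local bi-amalgamation then yields $1\in J_{S_{p}}$, that is $J_{S_{p}}=B_{S_{p}}$.

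This last equality produces the contradiction that closes the argument. If $J\neq B$, then $S_{p}\cap J=\varnothing$: an element $f(a)+j\in J$ with $a\in A-p$ would give $f(a)\in J$, hence $a\in f^{-1}(J)=I_{o}\subseteq p$, a contradiction. Consequently $J_{S_{p}}$ is a proper ideal of $B_{S_{p}}$, contradicting $J_{S_{p}}=B_{S_{p}}$. Therefore $J=B$, and symmetrically $J'=C$, after which $R=B\times C$ and the product fact completes both the Gaussian and the Pr\"ufer statements. I expect the main obstacle to be exactly this forcing step: tracking the zero-divisor structure of $J\times J'$ inside $R$, passing correctly to the local bi-amalgamations via the isomorphism for $R_{P}$, and extracting the membership $1\in J_{S_{p}}$ whose impossibility (when $J\neq B$) delivers the contradiction.
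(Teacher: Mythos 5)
Your proof is correct and follows essentially the same route as the paper's: both arguments localize at $P=p\bowtie^{f,g}(J,J')$ for a maximal ideal $p$ containing $I_{o}$, use the Pr\"ufer hypothesis to force $(u,0)\in (u,u')R_{P}$, and then extract $1\in J_{S_{p}}$ to contradict the properness of $J_{S_{p}}$ (equivalently, of $(I_{o})_{p}$). The only cosmetic difference is the justification of that containment --- the paper quotes Griffin's comparability theorem for the two principal ideals, while you use invertibility of the two-generated regular ideal plus the fact that an invertible ideal of a local ring is principal on one of its listed generators.
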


\begin{proof}
Assume that $A\bowtie^{f,g}(J,J')$ is Gaussian (resp., Pr\"ufer). We
claim that $I_0=f^{-1}(J)=g^{-1}(J')=A$. Deny, suppose that there
exists a maximal ideal $m$ of $A$ such that $I_0\subseteq m$. From
\cite[Lemma 5.1]{KLT}, $M:=m\bowtie^{f,g}(J,J')$ is a maximal ideal
of $A\bowtie^{f,g}(J,J')$ and we have
$$\big(A\bowtie^{f,g}(J,J')\big)_{M}\cong
A_m\bowtie^{f_m,g_m}(J_{S_{m}},J'_{S'_{m}}):=D.$$ Let $(j,j')$ be a
regular element of $J\times J'$. It is easy to see that $j/1$
(resp., $j'/1$) is also a regular element of $B_{S_m}$ (resp.,
$C_{S_m}$). Using the fact $A\bowtie^{f,g}(J,J')$ is Gaussian
(resp., Pr\"ufer), then by \cite[Theorem 13]{Gr}, the ideals
$(j/1,0)D$ and $(j/1,j'/1)D$ are comparable. Since $0\neq j'/1,$
then necessarily $(j/1,0)D\subseteq (j/1,j'/1)D$. Thus, there exist
$\alpha\in A_m$, $\beta\in J_{S_{m}}$ and $\gamma\in J'_{S_{m}}$
such that
$(j/1,0)=(j/1,j'/1)(f_m(\alpha)+\beta,g_m(\alpha)+\gamma)$. Hence,
it follows that $f_m(\alpha)+\beta=1$ and $g_m(\alpha)+\gamma=0$.
Thus, $\alpha\in (I_0)_m$ and so $f_m(\alpha)\in J_{S_{m}}$ and
$1=f_m(\alpha)+\beta\in J_{S_{m}}.$ Therefore, $J_{S_{m}}=B_{S_m}$.
Then $(I_0)=A_m$, which is a contradiction since $I_0\subseteq m.$
Hence, $I_0=f^{-1}(J)=A$ and so $J=B$ and $J'=C$ and
$A\bowtie^{f,g}(J,J')=B\times C$ which is Gaussian (resp.,
Pr\"ufer). It is known that Gaussian (resp., Pr\"ufer) notion is
stable under finite products. It follows that $B$ and $C$ are
Gaussian (resp., Pr\"ufer). The converse is straightforward.
 \end{proof}

Recall that the amalgamation of $A$ with $B$ along $J$ with respect
to $f$ is given by
$$A\bowtie^{f} J:=\big\{(a,f(a)+j)\mid a\in A, j\in J\big\}.$$
Clearly, every amalgamation can be viewed as a special
bi-amalgamation, since $A\bowtie^fJ= A\bowtie^{{\id_{A}},
f}(f^{-1}(J),J)$.\\

The following result is an immediate consequence of Theorem
\ref{thm1} and recovers \cite[Theorem 3.1]{Finno}.
\begin{corollary}\label{Amalg}
Under the above notation, assume that $f^{-1}(J)\times J$ is a
regular ideal of $A\times f(A)+J$. Then $A\bowtie^{f}J$ is Gaussian
( resp., Pr\"ufer)  if and only if  $f^{-1}(J)=A$ and $J=B$ and both
$A$ and $B$ are Gaussian (resp., Pr\"ufer).
\end{corollary}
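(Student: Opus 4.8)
The plan is to recognize the amalgamated algebra as a particular bi-amalgamation and then invoke Theorem \ref{thm1} directly. As noted just above the statement, the amalgamation may be rewritten as
$$A\bowtie^{f}J = A\bowtie^{\id_A,\,f}\big(f^{-1}(J),\,J\big),$$
so that, in the notation of Theorem \ref{thm1}, the first homomorphism is $\id_A:A\rightarrow A$ (hence the role of $B$ is played by $A$), the second homomorphism is $f:A\rightarrow B$ (hence the role of $C$ is played by $B$), the first ideal is $f^{-1}(J)\subseteq A$, and the second ideal is $J\subseteq B$. Under this dictionary the common contracted ideal $I_0=f^{-1}(J)$ is unchanged.

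Next I would verify that the hypothesis of Theorem \ref{thm1} specializes to the hypothesis of the corollary. The theorem requires $J\times J'$ to be a regular ideal of $(f(A)+J)\times(g(A)+J')$; under the substitution above this reads: $f^{-1}(J)\times J$ is a regular ideal of $\big(\id_A(A)+f^{-1}(J)\big)\times\big(f(A)+J\big)$. Since $f^{-1}(J)\subseteq A$, we have $\id_A(A)+f^{-1}(J)=A$, so the condition becomes exactly that $f^{-1}(J)\times J$ is a regular ideal of $A\times\big(f(A)+J\big)$, which is precisely the stated assumption.

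Finally I would read off the conclusion. Theorem \ref{thm1} asserts that the bi-amalgamation is Gaussian (resp. Pr\"ufer) if and only if the two ideals equal their ambient rings and those ambient rings are Gaussian (resp. Pr\"ufer). Translating through the dictionary, this says precisely that $A\bowtie^f J$ is Gaussian (resp. Pr\"ufer) if and only if $f^{-1}(J)=A$, $J=B$, and both $A$ and $B$ are Gaussian (resp. Pr\"ufer), which is the assertion of the corollary.

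The argument carries no genuine obstacle, and this is what the word ``immediate'' in the statement signals: the only point requiring care is the bookkeeping of the substitution, in particular the identity $\id_A(A)+f^{-1}(J)=A$ that makes the regularity hypotheses match up. Everything else is a direct transcription of Theorem \ref{thm1}, and the recovery of \cite[Theorem 3.1]{Finno} follows by the same specialization.
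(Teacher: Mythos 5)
Your proposal is correct and is exactly the route the paper intends: the paper gives no separate proof, simply noting the identification $A\bowtie^{f}J=A\bowtie^{\id_{A},f}(f^{-1}(J),J)$ and declaring the corollary an immediate consequence of Theorem \ref{thm1}. Your careful bookkeeping of the substitution (in particular $\id_{A}(A)+f^{-1}(J)=A$, which aligns the regularity hypotheses) fills in precisely the details the paper leaves implicit.
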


 Let $I$ be a \emph{proper} ideal of $A$. The (amalgamated) duplication of $A$ along
$I$ is a special amalgamation given by $$A\bowtie
I:=A\bowtie^{\id_{A}} I=\big\{(a,a+i)\mid a\in A, i\in I\big\}.$$
The next corollary is an immediate consequence of Corollary
\ref{Amalg} on the transfer of Gaussian and Pr\"ufer properties to
duplications and capitalizes \cite[Corollary 3.3]{Finno}.

\begin{corollary}Let $A$ be a ring and $I$ be a regular ideal of $A$. Then $A\bowtie I$  is Gaussian (resp., Pr\"ufer) if and only if $A$ is Gaussian (resp., Pr\"ufer) and $I=A$.

\end{corollary}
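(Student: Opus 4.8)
The plan is to derive this corollary directly from Corollary~\ref{Amalg} by specializing the amalgamation data to the duplication setting, since $A\bowtie I = A\bowtie^{\id_A} I$. I would take $f=\id_A$, $B=A$, and $J=I$ in Corollary~\ref{Amalg}. The first step is to translate the hypothesis: under $f=\id_A$ we have $f(A)+J=A+I=A$ and $f^{-1}(J)=I$, so the condition ``$f^{-1}(J)\times J$ is a regular ideal of $A\times(f(A)+J)$'' becomes ``$I\times I$ is a regular ideal of $A\times A$''. I would observe that this is equivalent to $I$ being a regular ideal of $A$, which is precisely the standing hypothesis of the corollary; indeed $(i,i')\in I\times I$ is a regular element of $A\times A$ exactly when both $i$ and $i'$ are regular in $A$, and $I\times I$ contains such a regular element iff $I$ contains a regular element of $A$.

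The second step is to read off the conclusion. Corollary~\ref{Amalg} asserts that $A\bowtie^{f}J$ is Gaussian (resp.\ Pr\"ufer) if and only if $f^{-1}(J)=A$, $J=B$, and both $A$ and $B$ are Gaussian (resp.\ Pr\"ufer). Substituting our choices, $f^{-1}(J)=I$, $B=A$, and $J=I$, the three conditions collapse to: $I=A$ and $A$ is Gaussian (resp.\ Pr\"ufer) (the requirement ``$B$ is Gaussian/Pr\"ufer'' is redundant with ``$A$ is Gaussian/Pr\"ufer'' since $B=A$). This yields exactly the stated equivalence.

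I do not anticipate a genuine obstacle here, since the result is a straightforward specialization. The only point requiring mild care is the hypothesis-matching in the first step: one must verify that ``$I\times I$ regular in $A\times A$'' and ``$I$ regular in $A$'' coincide, which follows from the componentwise description of zero-divisors in a product ring. I would also note in passing that the definition of duplication in the excerpt requires $I$ to be a \emph{proper} ideal, so the ``only if'' direction forces $I=A$, which is consistent only in the degenerate sense that $A\bowtie A = A\times A$; I would remark that the corollary is really a statement about when the duplication reduces to the trivial product $A\times A$, mirroring how Theorem~\ref{thm1} forces $J=B$ and $J'=C$. With these observations the proof is complete by direct appeal to Corollary~\ref{Amalg}.
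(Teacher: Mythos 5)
Your proof is correct and follows exactly the route the paper intends: the paper states this corollary without proof as an ``immediate consequence of Corollary~\ref{Amalg},'' and your specialization $f=\id_A$, $B=A$, $J=I$, together with the check that $I\times I$ regular in $A\times A$ is equivalent to $I$ regular in $A$, fills in precisely the details the paper leaves implicit. Your side remark about the clash with the requirement that $I$ be a \emph{proper} ideal in the definition of the duplication is a fair observation about the statement itself, but it does not affect the validity of the argument.
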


 The next result investigates when the bi-amalgamation is local Gaussian
in case $J\times J'$ is not a regular ideal. We recall an important
characterization of a local Gaussian ring $A$. Namely, for any two
elements $a$ and $b$ in the ring $A$, we have $(a,b)^2=(a^2)$ or
$(b^2)$; moreover if $ab=0$ and $(a,b)^2=(a^2)$, then $b^2=0$ (see
\cite[Theorem 2.2]{BG2}).

\begin{proposition}\label{gaus}
Assume that $(A,m)$ be a local ring and $J$ (resp., $J'$) be a
nonzero proper ideal of $B$ (resp., $C$) such that $J\times
J'\subseteq Jac(B\times C).$ Then the following statements hold:
\begin{enumerate}
\item If $A\bowtie^{f,g}(J,J')$ is Gaussian, then so are $f(A)+J$ and $g(A)+J'$.
\item If $A$, $f(A)+J$ and $g(A)+J'$ are Gaussian, $J^2=0$, $J'^2=0$, $\forall$ $a\in m,$
$f(a)J=f(a)^2J$ and $g(a)J'=g(a)^2J',$ then $A\bowtie^{f,g}(J,J')$
is Gaussian.
\item Assume that $A$ is Gaussian, $J^2=0$, $J'^2=0$ and $I_0$ is a prime ideal of
$A$. Then $A\bowtie^{f,g}(J,J')$ is Gaussian if and only if
$f(A)+J$, $g(A)+J'$ are Gaussian, $\forall$ $a\in m,$
$f(a)J=f(a)^2J$ and $g(a)J'=g(a)^2J'$.
\end{enumerate}
 \end{proposition}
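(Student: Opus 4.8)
The plan is to reduce all three statements to the local Gaussian characterization \cite[Theorem 2.2]{BG2} recalled just before the statement, so the first order of business is the structural observation that, under the standing hypotheses, the bi-amalgamation $R:=A\bowtie^{f,g}(J,J')$ is local. Indeed $J\times J'$ is an ideal of $R$ contained in $Jac(B\times C)$, hence in $Jac(R)$: for $x\in J\times J'$ and $r\in R$, the element $1-rx$ is a unit of $B\times C$ congruent to $1$ modulo $J\times J'$, so its inverse already lies in $1+(J\times J')\subseteq R$. Since $I_0\subseteq m$, the quotient $R/(J\times J')\cong A/I_0$ is local, and an ideal in the Jacobson radical with local quotient forces $R$ to be local. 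The identical argument shows $f(A)+J$ and $g(A)+J'$ are local, which is what lets me apply the characterization to each of the three rings.

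For (1) I would observe that $f(A)+J$ and $g(A)+J'$ are homomorphic images of $R$: the projection $R\to B$ has image $f(A)+J$ and kernel $0\times J'$, and symmetrically for $C$. Because the content identity passes to quotients (the image of $c(p)c(q)$ is $c(\bar p)c(\bar q)$, and likewise $c(pq)$ maps to $c(\bar p\,\bar q)$), the Gaussian property descends along surjections, giving (1) at once.

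For (2), with locality in hand, I would take arbitrary $X=(f(a)+j,g(a)+j')$ and $Y=(f(b)+k,g(b)+k')$ in $R$ and verify the two clauses of the characterization. Units are immediate, so I may assume $a,b\in m$; then $A$ Gaussian yields, say, $(a,b)^2=(a^2)$, i.e. $b^2=a^2t$ and $ab=a^2s$. The goal is to manufacture a single witness $Z=(f(c)+l,g(c)+l')\in R$ with $X^2Z=Y^2$, together with an analogous one for $XY$, so that $(X,Y)^2=(X^2)$ holds simultaneously in both coordinates. Here $J^2=J'^2=0$ annihilates every quadratic $J$-term, while $f(a)J=f(a)^2J$ and $g(a)J'=g(a)^2J'$ rewrite the surviving cross terms $f(a)j$, $g(a)j'$ as multiples of $f(a)^2$, $g(a)^2$; the Gaussian hypotheses on $f(A)+J$ and $g(A)+J'$ then supply the coordinatewise factorizations that must be matched to a common base $c$. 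I expect this \emph{coordinate alignment} --- forcing the two independent coordinate witnesses furnished by the component rings to be realized by one shared element of $A$ --- to be the principal difficulty, the delicate case being when $a$ and $b$ become indistinguishable modulo $J$, $J'$ (equivalently modulo $I_0$), where the dominant element is not dictated by $A$ alone and must be settled using $J^2=J'^2=0$. The clause $XY=0\Rightarrow Y^2=0$ I would check by the same bookkeeping.

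Finally, (3) is the synthesis. The implication $(\Leftarrow)$ is exactly (2), since $A$ Gaussian and $J^2=J'^2=0$ are already assumed. For $(\Rightarrow)$, part (1) gives that $f(A)+J$ and $g(A)+J'$ are Gaussian, so only the two multiplicative identities remain. To extract $f(a)J=f(a)^2J$ I would apply the characterization to $X=(f(a),g(a))$ and $Y=(j,0)$: since $J^2=0$ one has $Y^2=0$ and $(X,Y)^2$ is generated by $X^2$ and $XY=(f(a)j,0)$. In the case $(X,Y)^2=(X^2)$, a witness $Z=(f(c)+l,g(c)+l')$ gives $g(a)^2g(c)\in g(a)^2J'$, whence $\overline{a^2c}=0$ in $A/I_0$; primeness of $I_0$ forces $c\in I_0$ when $a\notin I_0$, so $f(a)j=f(a)^2f(c)+f(a)^2l\in f(a)^2J$, while the remaining cases ($a\in I_0$, or $(X,Y)^2=(Y^2)$ giving $X^2=0$) are immediate from $J^2=0$ and primeness. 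The symmetric choice $Y=(0,j')$ yields $g(a)J'=g(a)^2J'$, completing (3). The hypothesis that $I_0$ is prime is used precisely to upgrade the mod-radical vanishing $\overline{a^2c}=0$ into the membership $c\in I_0$, and that is where it is indispensable.
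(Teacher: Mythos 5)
Your treatment of (1) and (3) is sound and follows the same route as the paper: (1) via the isomorphisms $f(A)+J\simeq \frac{A\bowtie^{f,g}(J,J')}{0\times J'}$ and $g(A)+J'\simeq \frac{A\bowtie^{f,g}(J,J')}{J\times 0}$ and stability of the Gaussian property under factor rings, and (3) via the local Gaussian characterization applied to $(f(a),g(a))$ and $(x,0)$, with primeness of $I_0$ used exactly where you say it is. Your direct argument for locality of the bi-amalgamation is also fine (the paper simply cites \cite[Proposition 5.4 (2)]{KLT}).

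The genuine gap is in (2): you explicitly leave the central verification --- producing the witness $Z$ with $X^2Z=Y^2$ and its analogue for $XY$ --- as "the principal difficulty," and the route you sketch for resolving it would not work as described. There is no coordinate-alignment problem to solve, and the Gaussian hypotheses on $f(A)+J$ and $g(A)+J'$ are \emph{not} what supplies the witness. The common base is the element $x\in A$ already furnished by $A$ being local Gaussian: from $(a,b)^2=(a^2)$ one has $b^2=a^2x$ and $ab=a^2y$, and the witness is $x$ (resp.\ $y$) corrected by elements of $J$ and $J'$ extracted from the hypotheses $f(a)J=f(a)^2J$, $f(b)J\subseteq f(b)^2J=f(a)^2f(x)J$, etc.; concretely the paper writes $2f(b)j=f(a)^2f(x)j_1$, $2f(a)if(x)=f(a)^2i_1$, and checks, using $J^2=J'^2=0$, that
$$(f(b)+j,g(b)+j')^2=(f(a)+i,g(a)+i')^2\big(f(x)+f(x)j_1-i_1,\,g(x)+g(x)j_1'-i_1'\big).$$
Since the two correction terms range independently over $J$ and $J'$ while the base $x$ is shared by construction, the element on the right automatically lies in $A\bowtie^{f,g}(J,J')$ and no matching is required. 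The Gaussian hypotheses on $f(A)+J$ and $g(A)+J'$ enter only in the second clause of the characterization (if $XY=0$ and $(X,Y)^2=(X^2)$ then $Y^2=0$), which you dismiss as "the same bookkeeping" but which is precisely the step where those hypotheses are applied coordinatewise --- and there no common witness is needed, since one only has to conclude $(f(b)+j)^2=0$ and $(g(b)+j')^2=0$ separately. So your sketch has the right ingredients but assigns them the wrong roles and omits the computation that constitutes the proof of (2).
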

\begin{proof}
Notice that from \cite[Proposition 5.4 (2)]{KLT}, $(A\bowtie^{f,g}(J,J'),m\bowtie^{f,g}(J,J'))$ is local since $(A,m)$ is local and $J\times J'\subseteq Jac(B\times C)$.\\
$(1)$  Since the Gaussian property is stable under factor rings
(here, $f(A)+J\simeq \frac{A\bowtie^{f,g}(J,J')}{0\times J'}$ and
$g(A)+J'\simeq \frac{A\bowtie^{f,g}(J,J')}{J\times 0}$ by
\cite[Proposition 4.1 (2)]{KLT}), then result is straightforward.\\
$(2)$ Assume that $A$, $f(A)+J$ and $g(A)+J'$ are Gaussian, $J^2=0$,
$J'^2=0$ and $\forall$ $a\in m,$ $f(a)J=f(a)^2J$ and
$g(a)J'=g(a)^2J'$. Our aim is to show that $A\bowtie^{f,g}(J,J')$ is
Gaussian. Let $(f(a)+i,g(a)+i')$ and $(f(b)+j,g(b)+j')\in
A\bowtie^{f,g}(J,J')$.
Two cases are possible:\\
Case 1: $a$ or $b\not \in m$. Assume without loss of  generality
that $a\not \in m$. Then $(f(a)+i,g(a)+i')\not \in
m\bowtie^{f,g}(J,J')$. So $(f(a)+i,g(a)+i')$  is invertible in
$A\bowtie^{f,g}(J,J')$. Therefore,
$((f(a)+i,g(a)+i'),(f(b)+j,g(b)+j'))^2=((f(a)+i,g(a)+i')^2)=A\bowtie^{f,g}(J,J')$.
Moreover, if
$((f(a)+i,g(a)+i'),(f(b)+j,g(b)+j'))^2=((f(a)+i,g(a)+i')^2)=A\bowtie^{f,g}(J,J')$
and $(f(a)+i,g(a)+i')(f(b)+j,g(b)+j')=(0,0)$, then it follows that $(f(b)+j,g(b)+j')=(0,0)$, making $(f(b)+j,g(b)+j')^2=(0,0),$ as desired.\\
Case 2: $a$ and $b\in m$. Using the fact that $A$ is local Gaussian,
then $(a,b)^2=(a^2)$ or $(b^2)$. We may assume that $(a,b)^2=(a^2)$.
So we have, $b^2=a^2x$ and $ab=a^2y$ for some $x, y\in A$. Moreover
$ab=0$ implies that $b^2=0$. So $f(b)^2=f(a)^2f(x)$,
$g(b)^2=g(a)^2g(x)$ and $f(a)f(b)=f(a)^2f(y),$
$g(a)g(b)=g(a)^2g(y)$. By assumption, $2f(b)j, f(b)i \in f(b)^2 J$
and $2f(a)if(x), f(a)j, 2f(a)if(y) \in f(a)^2 J.$ Therefore, there
exist $j_1,i_1,j_2,i_2,i_3\in J$ such that $2f(b)j=f(a)^2f(x)j_1,$
$2f(a)if(x)=f(a)^2i_1,$ $f(a)j=f(a)^2j_2$, $f(b)i=f(a)^2f(x)i_2,$
$2f(a)if(y)=f(a)^2i_3$ and similarly, there exist
$j_1',i_1',j_2',i_2',i_3'\in J'$ such that $2g(b)j'=g(a)^2g(x)j_1',$
$2g(a)i'g(x)=g(a)^2i_1',$ $g(a)j'=g(a)^2j_2'$,
$g(b)i'=g(a)^2g(x)i_2'$ and $2g(a)i'g(y)=g(a)^2i_3'$. In view of the
fact that $J^2=0$ and $J'^2=0$, one can easily check that
$(f(b)+j,g(b)+j')^2=(f(a)+i,g(a)+i')^2(f(x)+f(x)j_1-i_1,g(x)+g(x)j_1'-i_1')$
and
$(f(b)+j,g(b)+j')(f(a)+i,g(a)+i')=(f(a)+i,g(a)+i')^2(f(y)+f(x)i_2+j_2-i_3,g(y)-g(x)i'_2+j'_2-i'_3)$.
Consequently,
$((f(a)+i,g(a)+i'),(f(b)+j,g(b)+j'))^2=((f(a)+i,g(a)+i')^2)$.
Moreover, assume that $(f(a)+i,g(a)+i')(f(b)+j,g(b)+j')=(0,0)$.
Hence, $(f(a)+i)(f(b)+j)=0$ and $(g(a)+i')(g(b)+j')=0$.
 Since $((f(a)+i),(f(b)+j))^2=((f(a)+i)^2),$ $((g(a)+i'),(g(b)+j'))^2=((g(a)+i')^2),$ and $f(A)+J$ and $g(A)+J'$ are local Gaussian, then $(f(b)+j)^2=0$ and $(g(b)+j')^2=0$.
 Thus, $(f(b)+j,g(b)+j')^2=(0,0)$. Finally, $A\bowtie^{f,g}(J,J')$ is Gaussian, as desired.\\

$(3)$ If $A,$ $f(A)+J,$ $g(A)+J'$ are Gaussian, $J^2=0$, $\forall$
$a\in m,$ $f(a)J=f(a)^2J,$ $J'^2=0$ and $g(a)J'=g(a)^2J'$, then by
statement $(2)$ above, $A\bowtie^{f,g}(J,J')$ is Gaussian.
Conversely, assume that $A\bowtie^{f,g}(J,J')$ is Gaussian. Then by
 statement $(1)$ above, $f(A)+J$ and $g(A)+J'$ are Gaussian. Next,
 we show that $\forall$ $a\in m,$ $f(a)J=f(a)^2J$. It is clear that $f(a)^2J\subseteq
 f(a)J$. On the other hand, let $a\in
 m$ and $0\neq x\in J$. If $f(a)=0,$ then $f(a)J=f(a)^2J$, as
 desired. We may assume that $f(a)\neq 0$. Then obviously, $(0,0)\neq(f(a),g(a))$ and $(0,0)\neq(x,0)$ are elements of
 $A\bowtie^{f,g}(J,J')$. Using the fact $A\bowtie^{f,g}(J,J')$ is (local)
 Gaussian, then $((f(a),g(a)),(x,0))^2=((f(a),g(a))^2$ or
 $((x,0))^2$. Since $J^2=0$, say
 $((f(a),g(a)),(x,0))^2=((f(a),g(a))^2)$. If $(f(a),g(a))^2=(0,0)$,
 then it follows that $xf(a)=0$ and so $f(a)J\subseteq f(a)^2J$, as desired.
We may assume that $(f(a),g(a))^2\neq (0,0)$. And so there exists
$(f(b)+j,g(b)+j')\in  A\bowtie^{f,g}(J,J')$ such that
$(xf(a),0)=(f(a^2),g(a^2))(f(b)+j,g(b)+j')$. Therefore,

$$\left\{\begin{array}{cccc}
           xf(a)& = & (f(a^2)(f(b)+j)  & \quad (i)\\
             0 & = & (g(a^2b)+g(a^2)j') & \quad (ii)\\
         \end{array}\right.$$

From equation $(ii)$, it follows that $a^2b\in I_0$ which is prime
ideal of $A$. So $a^2\in I_0$ or $b\in I_0$. Two cases are possible:\\
Case 1: $a^2 \in I_0$. Then $a\in I_0$ and $f(a)\in J$. Therefore,
$f(a)J=f(a)^2J=0$ (as $J^2=0$).\\
Case 2:$b\in I_0$. Then $f(b)\in J$ and $f(b)+j\in J$. Consequently,
$xf(a)=(f(a^2)(f(b)+j)\in f(a)^2J$. Hence, $f(a)J\subseteq f(a)^2J$,
as desired. Next, it remains to show that $\forall$ $a\in m,$
$g(a)J'=g(a)^2J'$. Clearly, $g(a)^2J'\subseteq
 g(a)J'$. On the other hand, let $a\in m$ and $0\neq x'\in J'$. With similar
 argument as previously, it follows that $g(a)J'\subseteq g(a)^2J',$
 as desired.

\end{proof}

Proposition \ref{gaus} enriches the literature with new original examples of non-arithmetical Gaussian rings. Recall that for a ring $A$ and an $A-$module $E$,
the trivial ring extension of $A$ by $E$ (also called idealization of $E$ over $A$) is the ring $R:=A\propto E$ whose underlying group is $A\times E$ with multiplication given by $(a,e)$$(a',e')=(aa',ae'+ea')$.\\

 \begin{example}
 Let $(A,m):=(A_1\propto E_1 ,m_1\propto E_1)$ be the trivial ring extension of $A_1$ by $E_1$ which is a non-arithmetical Gaussian ring with $m_1^2=0$,
 (for instance $(A_1,m_1):=(\mathbb{Z}/4\mathbb{Z},2.\mathbb{Z}/4\mathbb{Z})$, $E_1$ be a nonzero $\frac{A_1}{m_1}-$vector space.
 By \cite[Theorem 2.1 (2) and (3)]{KMM}, $A$ is a non-arithmetical Gaussian ring, as $A_1$ is not a field).
 Let $B:=A\propto E$ be the trivial ring extension of $A$ by a nonzero $A/m-$vector space $E$. Consider $$\begin{array}{clcl}
  f: & A & \hookrightarrow & B \\
   & (a_1,e_1) & \hookrightarrow & f((a_1,e_1))=((a_1,e_1),0) \\
\end{array}$$ be an injective ring homomorphism and $J:=m\propto E=(m_1\propto E_1)\propto E$ be the maximal ideal of B. Let $C:=A_1$ and let $$\begin{array}{clcl}
  g: & A & \to & C \\
   & (a_1,e_1) & \to & g((a_1,e_1))=a_1 \\
\end{array}$$ be a surjective ring homomorphism and $J':=m_1$ be the maximal ideal of $C$. Clearly, $f^{-1}(J)=g^{-1}(J')=m_1\propto E_1$. Then : \\
$(1)$ $A\bowtie^{f,g}(J,J')$ is Gaussian.\\
$(2)$ $A\bowtie^{f,g}(J,J')$ is not arithmetical.\\
 \end{example}
\begin{proof}
$(1)$ One can verify that $J^2=0,$ $J'^2=0$, $f(a)J=f(a)^2J=0$, $g(a)J'=g(a)^2J'=0$ for all $a\in m$. Hence by using statement (2) of Proposition \ref{gaus}, it follows that $A\bowtie^{f,g}(J,J')$ is Gaussian.\\
$(2)$ By \cite[Theorem 2.1 (2)]{KMM}, $A\bowtie^{f,g}(J,J')$ is not
arithmetical since $f(A)+J=A\propto 0+m\propto E=A\propto E$ which
is not arithmetical (by \cite[Theorem 3.1 (3)]{bkm}, as $A$ is not a
field).
\end{proof}

Total rings of quotients are important source of Pr\"ufer rings.
Next, we study the transfer of this notion to bi-amalgamated
algebras, in case $J\times J'$ is not a regular ideal of
$(f(A)+J)\times (g(A)+J')$. For any ring $R$ and $J$ an ideal of
$R$, we denote by $Z(R)$ (resp., $Ann(J)$), the set of zero-divisor
elements of $R$ (resp., the annihilator of $J$).
\begin{proposition}\label{prufer}
Let $(A,m)$ be a local total ring of quotients, $f: A\rightarrow B,$
$g:A\rightarrow C$ be two ring homomorphisms, and let $J$ (resp.,
$J'$) be a nonzero proper ideal of $B$ (resp., $C$) such that
$f^{-1}(J)=g^{-1}(J')$, $J\times J'\subseteq Jac(B\times C)$. Assume
that $f$ is injective, $J^2=0$ and $J'^2=0$. Then
$A\bowtie^{f,g}(J,J')$ is a local total ring of quotients. In
particular, $A\bowtie^{f,g}(J,J')$ is Pr\"ufer.
\end{proposition}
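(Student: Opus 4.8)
The plan is to promote the local assertion to the total-quotient-ring assertion, and then read off the Pr\"ufer conclusion for free. By \cite[Proposition 5.4 (2)]{KLT} the ring $R:=A\bowtie^{f,g}(J,J')$ is local with maximal ideal $M:=m\bowtie^{f,g}(J,J')$ (exactly the reduction recorded at the start of the proof of Proposition \ref{gaus}), and an element $(f(a)+j,g(a)+j')$ lies in $M$ precisely when $a\in m$, so its non-units are exactly $M$. Since $R$ is local, showing that $R$ is a total ring of quotients amounts to proving $M\subseteq Z(R)$. Thus I would fix $a\in m$ together with an arbitrary $x=(f(a)+j,g(a)+j')\in M$, and manufacture a nonzero annihilator of $x$.

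The input that drives the construction is that $(A,m)$ is a \emph{local} total ring of quotients, so $m=Z(A)$ and there exists $c\in A$ with $c\neq 0$ and $ac=0$. Since $f$ is injective, $f(c)\neq 0$, so $w_1:=(f(c),g(c))$ is a nonzero element of $R$, and using $f(ac)=g(ac)=0$ a direct computation gives
\[
x\,w_1=\big(f(a)f(c)+jf(c),\,g(a)g(c)+j'g(c)\big)=\big(jf(c),\,j'g(c)\big).
\]
If this already vanishes, $w_1$ is the desired witness. The place where this naive choice fails — and what I expect to be the main obstacle — is that the cross terms $jf(c)$ and $j'g(c)$ need not be zero, so $w_1$ by itself does not always annihilate $x$.

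To clear this obstacle I would exploit that $N:=J\times J'$ is a square-zero ideal of $R$ contained in $R$ (it equals $\{(j,j'):j\in J,\ j'\in J'\}$, and $N^2\subseteq J^2\times J'^2=0$ because $J^2=J'^2=0$). Assuming $x\,w_1\neq 0$, say $jf(c)\neq 0$ (the case $j'g(c)\neq 0$ being symmetric), I would pass to the secondary witness $w_2:=(jf(c),0)\in N$, which is nonzero, and compute
\[
x\,w_2=\big((f(a)+j)\,jf(c),\,0\big)=\big(jf(ac)+j^2f(c),\,0\big)=(0,0),
\]
since $f(a)f(c)=f(ac)=0$ and $j^2\in J^2=0$. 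In either case $x$ is a zero-divisor, so $M\subseteq Z(R)$ and $R$ is a local total ring of quotients. Finally I would deduce the Pr\"ufer property at once: in a total ring of quotients every regular element is a unit, so any finitely generated regular ideal contains a unit, hence equals $R$ and is invertible. This is the standard mechanism, anticipated in the text preceding the statement, by which total rings of quotients are Pr\"ufer.
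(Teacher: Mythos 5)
Your argument is correct and follows essentially the same route as the paper's proof: both reduce, via \cite[Proposition 5.4 (2)]{KLT}, to showing that every element of the maximal ideal $m\bowtie^{f,g}(J,J')$ is a zero-divisor, both extract a nonzero annihilator $c$ of $a$ from the hypothesis that $A$ is a local total ring of quotients, both use injectivity of $f$ to guarantee $(f(c),g(c))\neq(0,0)$, and both absorb the leftover cross terms using $J^2=J'^2=0$. The only cosmetic difference is the organization of the case split --- the paper distinguishes whether $f(c)\in Ann(J)$ and $g(c)\in Ann(J')$ and otherwise uses a witness of the form $(f(c)k,0)$ with $k\in J$, whereas you split on whether the first witness $(f(c),g(c))$ already annihilates $x$ and otherwise pass to $(jf(c),0)$, which plays exactly the same role.
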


\begin{proof}
Assume that $f$ is injective, $J^2=0$ and $J'^2=0$. By \cite[Proposition 5.4 (2)]{KLT}, $(A\bowtie^{f,g}(J,J'),m\bowtie^{f,g}(J,J'))$ is local since $(A,m)$ is local and $J\times J'\subseteq Jac(B\times C)$.\\
Our aim is to show that $A\bowtie^{f,g}(J,J')$ is a total ring of quotients, we have to prove that each element $(f(a)+i,g(a)+i')$ of $A\bowtie^{f,g}(J,J')$, is invertible or zero-divisor element.\\
Let $(f(a)+i,g(a)+i')$ be an element of $A\bowtie^{f,g}(J,J')$. If $a\not \in m$, then $a$ is invertible. And so $(f(a)+i,g(a)+i')\not\in m\bowtie^{f,g}(J,J')$ . Consequently, $(f(a)+i,g(a)+i')$ is invertible in $A\bowtie^{f,g}(J,J')$, as desired.\\
Now, we may assume that $a\in m$. If $a=0$, then
$(f(a)+i,g(a)+i')=(i,i')\in Z(A\bowtie^{f,g}(J,J')),$ since
$J^2=J'^2=0.$ We may assume $a\neq 0$. Since $A$ is local total ring of quotients, then there exists $0\neq b\in A$ such that $ab=0$. So $f(a)f(b)=0$ and $g(a)g(b)=0$. Two cases are then possible :\\
Case $1$: $f(b)\in Ann(J)$ and $g(b)\in Ann(J')$. Using the fact that $f$ is injective, then there exists $(0,0)\neq (f(b),g(b))\in A\bowtie^{f,g}(J,J')$ $/$ $(f(a)+i,g(a)+i')(f(b),g(b))=(0,0)$. Consequently, $(f(a)+i,g(a)+i')\in Z(A\bowtie^{f,g}(J,J'))$.\\
Case $2$: Assume that $f(b)\not\in Ann(J)$ or $g(b)\not \in Ann(J')$. Then there exists $0\neq k\in J$ or $0\neq k'\in J'$  such that $f(b)k\neq 0$ or $g(b)k'\neq 0$.
So, $(f(a)+i,g(a)+i')(f(b)k,0)=(0,0)$ or $(f(a)+i,g(a)+i')(0,g(b)k')=(0,0)$. Hence, $(f(a)+i,g(a)+i')\in Z(A\bowtie^{f,g}(J,J'))$.
 Thus, $A\bowtie^{f,g}(J,J')$ is a local total ring of quotients. In particular, $A\bowtie^{f,g}(J,J')$ is Pr\"ufer.\\

\end{proof}

Proposition \ref{prufer} enriches the current literature with new original examples of Pr\"ufer rings which are not Gaussian rings.\\

\begin{example}
Let $(A,m)$ be a non Gaussian local total ring of quotient (for
instance $(A,m):=(A_1\propto \frac{A_1}{m_1},m_1\propto
\frac{A_1}{m_1})$ with $(A_1,m_1)$ be a local ring that is not
Gaussian, by using \cite[Theorem 3.1 (1) and (2)]{bkm}). Let
$(B,N):=(A\propto E,m\propto E)$ be the trivial ring extension of
$A$ by the nonzero $\frac{A}{m}-$vector space $E$ and $C:=B\propto
E'$ be the trivial ring extension of $B$ by the nonzero
$\frac{B}{N}-$vector space $E'$. Consider
$$\begin{array}{clcl}
  f: & A & \hookrightarrow & B \\
   & (a_1,e_1) & \hookrightarrow & f((a_1,e_1))=((a_1,e_1),0) \\
\end{array}$$ be an injective ring homomorphism and $J:=0\propto E$ be a nonzero proper ideal of $B$ and let $$\begin{array}{clcl}
  g: & A & \hookrightarrow & C \\
   & (a_1,e_1) & \hookrightarrow & g((a_1,e_1))=((a_1,e_1),0),0) \\
\end{array}$$ be an injective ring homomorphism and let $J':=J\propto E'$ be a proper
ideal of $C$. Obviously, $f^{-1}(J)=g^{-1}(J')=0.$ Then :\\
$(1)$ $A\bowtie^{f,g}(J,J')$ is Pr\"ufer.\\
$(2)$ $A\bowtie^{f,g}(J,J')$ is not Gaussian.
\end{example}

\begin{proof}
$(1)$ We claim that $A\bowtie^{f,g}(J,J')$ is a local total ring of quotients. Indeed, by \cite[Proposition 5.3]{KLT}, $A\bowtie^{f,g}(J,J')$ is local since $A$ is local and $J\times J'\subseteq
Jac(B\times C)$. One can easily check that $J^2=0,$ $J'^2=0$. Hence, by using Proposition \ref{prufer}, it follows that $A\bowtie^{f,g}(J,J')$ is a total ring of quotients. Hence, $A\bowtie^{f,g}(J,J')$ is Pr\"ufer.\\
$(2)$ By $(1)$ of Proposition \ref{gaus}, $A\bowtie^{f,g}(J,J')$ is
not Gaussian since $f(A)+J=A\propto 0+0\propto E=A\propto E$ is not
Gaussian (By \cite[Theorem 3.1 (2)]{bkm}, since $A$ is not Gaussian,
as $A_1$ is not Gaussian).
\end{proof}

\bibliographystyle{amsplain}

\end{document}